\documentclass[a4paper,10pt,leqno]{amsart}
\date{June 25, 2019}
\usepackage{graphicx} 
\usepackage{verbatim,enumerate}
\usepackage{color}
\usepackage[dvipsnames]{xcolor}
\title[]{%
Improvement
of the Bernstein-type theorem for 
space-like zero mean curvature
graphs in Lorentz-Minkowski space
using fluid mechanical duality
}

\author[]{
S.~Akamine, M.~Umehara and   
        K.~Yamada 
}   

\address[Shintaro Akamine]{%
Graduate School of Mathematics, 
Nagoya University, Chikusa-ku, Nagoya 464-8602, Japan
}
\email{s-akamine@math.nagoya-u.ac.jp}

\address[Masaaki Umehara]{%
   Department of Mathematical and Computing Sciences,
   Tokyo Institute of Technology,
   Tokyo 152-8552, Japan
}
\email{umehara@is.titech.ac.jp}

\address[Kotaro Yamada]{%
   Department of Mathematics,
   Tokyo Institute of Technology,
   Tokyo 152-8551, Japan
}
\email{kotaro@math.titech.ac.jp}



\thanks{
Umehara was partially supported by 
the Grant-in-Aid for Scientific Research  (A) No.\ 26247005,  
and Yamada by (B) No.\  17H02839 from Japan Society for the 
Promotion of Science.
}

\usepackage{amsthm}
\theoremstyle{plain}

 \newtheorem{theorem}{Theorem}[section]

 \newtheorem{fact}[theorem]{Fact}
 \newtheorem{lemma}[theorem]{Lemma}

\theoremstyle{definition}
 \newtheorem{definition}[theorem]{Definition}
\theoremstyle{remark}
 \newtheorem{remark}[theorem]{Remark}
 \newtheorem*{remark*}{Remark}
\newtheorem{example}[theorem]{Example}
 \newtheorem*{acknowledgement}{Acknowledgement}

\numberwithin{equation}{section}

\newcommand{\op}[1]{{\operatorname{#1}}}

\newcommand{\mb}[1]{\vect{#1}}

\newcommand{\vect}[1]{\boldsymbol{#1}}
\newcommand{\R}{\boldsymbol{R}}

\renewcommand{\phi}{\varphi}

\newcommand{\pmt}[1]{{\begin{pmatrix} #1  \end{pmatrix}}}

\begin{document}
\maketitle

\begin{abstract}
Calabi's Bernstein-type 
theorem asserts that
{\it a zero mean curvature
entire graph in Lorentz-Minkowski space
$\mb L^3$
which admits only space-like points is a space-like plane}.
Using the fluid mechanical duality between
minimal surfaces in Euclidean 3-space $\mb E^3$
and maximal surfaces in Lorentz-Minkowski space
$\mb L^3$, we give
an improvement of this Bernstein-type 
theorem.  More precisely, we show that
{\it a zero mean curvature
entire graph in $\mb L^3$
which does not admit time-like points
$($namely, a graph consists of only space-like and
light-like points$)$ is a  plane}.
\end{abstract}

\section{Introduction} \label{sec:1} 
Consider a $2$-dimensional barotropic steady flow on 
a simply connected domain $D$ in the $xy$-plane $\R^2$
whose velocity vector field is $\vect{v}=(u,v)$, 
with  density $\rho$ and pressure $p$.
We assume there are no external forces.
Then
\begin{itemize}
\item the flow is a foliation of the integral curve
of $\vect{v}$,
\item $\rho$ is a scalar field on $\R^2$,
\item $p\colon\R\to \R$ is a monotone function of $\rho$,
\item $c:=\sqrt{p'(\rho)}$ ($p':=dp/d\rho$)
is called the {\it local speed of sound}.
\item The folowing Euler's equation of motion holds
\begin{equation}\label{eq:euler}
dp+\frac{\rho}2 \, d(|{\mb v}|^2)=0.
\end{equation}
\end{itemize}
We also assume the flow is {\it irrotational}, that is,
\begin{equation}\label{eq:i}
0=\op{rot}(\vect{v})=v_x-u_y,
\end{equation}
where 
$
v_x:=\partial v/\partial x, \,\, u_y:=\partial u/\partial y.
$
Here, \lq the equation of continuity\rq\
is equivalent to the fact that
\begin{equation}\label{eq:c}
0=\op{div}(\rho\vect{v})=(\rho u)_x+(\rho v)_y.
\end{equation}
By \eqref{eq:i}, 
there exists a function $\Phi\colon D\to \R$,
called the {\it potential}
of the flow, such that $\nabla \Phi=\vect{v}$,
where $\nabla \Phi:=(\Phi_x,\Phi_y)$.
Since $p$ is a function of $\rho$,
the fact $c^2=p'(\rho)$ and
\eqref{eq:euler} yield that
\begin{equation}\label{eq:rho-xy}
\rho_x
=-\frac{\rho(uu_x+vv_x)}{c^2},\quad
\rho_y
=-\frac{\rho(uu_y+vv_y)}{c^2}.
\end{equation}
By \eqref{eq:c}, one can easily check that
\begin{equation}\label{eq:c2}
0
=(c^2-\Phi_x^2)\Phi_{xx}-2\Phi_x\Phi_y 
\Phi_{xy}+(c^2-\Phi_y^2)\Phi_{yy}.
\end{equation}
On the other hand, by \eqref{eq:c},
there exists a function $\Psi\colon D\to \R$,
called the {\it stream function}
of the flow, such that 
\begin{equation}\label{eq:s}
\Psi_x=-\rho v,\qquad 
\Psi_y=\rho u. 
\end{equation}
If we set $\xi:=\rho u$ and $\eta:=\rho v$,
\eqref{eq:rho-xy} can be written as
$$
(\rho^2c^2-\xi^2-\eta^2)(\rho_x,\rho_y)=-\rho(\xi \xi_x+\eta \eta_x,
\xi \xi_y+\eta \eta_y).
$$
Since 
$$
0=v_x-u_y=\frac{\eta_x}{\rho}-\frac{\xi_y}{\rho}
-\frac{\eta \rho_x}{\rho^2}+\frac{\xi \rho_y}{\rho^2},
$$
the identity $0=\rho(\xi^2+\eta^2-\rho^2c^2)(v_x-u_y)$
yields that
\begin{align}
\label{eq:i2}
0
=(\rho^2c^2-\Psi_y^2)\Psi_{xx}+
2\Psi_x\Psi_y \Psi_{xy}+(\rho^2c^2-\Psi_x^2)\Psi_{yy}.
\end{align}
A flow satisfying
\begin{equation}\label{eq:C}
\rho c=1
\end{equation}
is called a {\it Chaplygin gas flow}.
For a given stream function
$\Psi\colon D\to \R$ of
the Chaplygin gas flow, we set
\begin{equation}\label{eq:B}
B_\Psi:=1-\Psi_x^2-\Psi_y^2.
\end{equation}

In this paper, for the sake of simplicity,
we abbreviate 
\lq {\bf zero mean curvature}\rq\
by \lq {\bf ZMC}\rq.\
Under the condition \eqref{eq:C},
the equation 
\eqref{eq:i2}
for the stream function $\Psi$ reduces to
\begin{equation}\label{eq:s3}
(1-\Psi_y^2)\Psi_{xx}+2\Psi_x\Psi_y 
\Psi_{xy}+(1-\Psi_x^2)\Psi_{yy}=0.
\end{equation}
We call this the {\it ZMC-equation} in Lorentz-Minkowski 3-space 
$\mb L^3$ of signature $(++-)$.
In fact, at the point where  
$B_\Psi\ne 0$ (cf. \eqref{eq:B})
the mean curvature function $H$ of the graph $t=\Psi(x,y)$
is well-defined, where $(x,y,t)$ are the canonical coordinates
of $\mb L^3$.
Then \eqref{eq:s3} is equivalent to the condition that
$H=0$. 

\begin{definition}
A surface in $\mb L^3$ whose image 
can be locally
expressed as the graph of a certain
$\Psi$ satisfying \eqref{eq:s3} 
after a suitable motion in $\mb L^3$
is called a {\it ZMC-surface}.
A point where $B_\Psi>0$
(resp. $B_\Psi<0$, $B_\Psi=0$) 
is said to be
{\it space-like} (resp. {\it time-like}, {\it light-like}).

A ZMC-surface consisting only of space-like points
is called a {\it maximal surface}.
On the other hand, a surface in $\mb L^3$ consisting only
of light-like points is called a {\it light-like surface}. 
\end{definition}

It is known that the identity $B_\Psi=0$
implies \eqref{eq:s3} (see \cite[Proposition 2.1]{UY2}).
In particular, any light-like surfaces are
ZMC-surfaces in our sense.

If $\rho c=1$, then we have
$
1/{\rho^2}=c^2=dp/d\rho,
$
that is, $dp=d\rho/\rho^2$ is obtained.
Substituting this into \eqref{eq:euler}, we get
$
d(|{\mb v}|^2-{1}/{\rho^2})=0
$
and so there exists a constant $\mu$ such that
\begin{equation}\label{eq:mu-def}
|{\mb v}|^2+\mu=\frac{1}{\rho^2}(=c^2).
\end{equation}
By \eqref{eq:s}, we can rewrite this as
\begin{equation}\label{eq:rho0}
B_\Psi=\mu \rho^2.
\end{equation}
By \eqref{eq:mu-def} and \eqref{eq:rho0},
the sign change of $B_\Psi$ corresponds 
to the type change of the Chaplygin gas flow from 
sub-sonic ($|{\mb v}|<c$) to
super-sonic ($|{\mb v}|>c$),
that is, the sub-sonic part satisfies
$B_\Psi>0$.
If $\mu=0$, then $B_\Psi$ vanishes identically,
and the graph of $\Psi$ gives a light-like surface.
Such surfaces are discussed in the appendix, and
we now consider the case $\mu\ne 0$.
Since $B_\Psi$ and $\mu$ have the same sign (cf. \eqref{eq:rho0}),
we can write
\begin{equation}\label{eq:rho}
\rho=\frac{1}{\sqrt{|\vect{v}|^2+\mu}}
=\sqrt{\frac{1-\Psi_x^2-\Psi_y^2}{\mu}}.
\end{equation}
By \eqref{eq:mu-def} and the fact
$|\vect{v}|^2=\Phi_x^2+\Phi_y^2$, \eqref{eq:c2} can be written as
\begin{equation}\label{eq:c3}
(\mu+\Phi_y^2)\Phi_{xx}-2 \Phi_x \Phi_y 
\Phi_{xy}+(\mu+ \Phi_x^2)\ \Phi_{yy}=0.
\end{equation}
We set
\begin{equation}\label{eq:mu1}
\phi(x,y)
 :=\tilde \mu \Phi(\tilde \mu x,
\tilde \mu y)
\qquad (\tilde \mu:=1/{\sqrt[4]{|\mu|}}).
\end{equation}
If $\mu>0$, then 
\eqref{eq:c3} reduces to
\begin{equation}\label{eq:c3a}
(1+\phi_y^2)\phi_{xx}-2\phi_x\phi_y 
\phi_{xy}+(1+\phi_x^2)\phi_{yy}=0,
\end{equation}
which is known as the condition that
the graph of $\phi(x,y)$ gives a minimal 
surface in the Euclidean 3-space $\mb E^3$.
On the other hand,
if $\mu<0$, then \eqref{eq:c3} reduces to
\begin{equation}\label{eq:c3b}
(1-\phi_y^2)\phi_{xx}+2\phi_x\phi_y 
\phi_{xy}+(1-\phi_x^2)\phi_{yy}=0,
\end{equation}
which is the ZMC-equation (cf. \eqref{eq:s3}).
It can be easily checked that the graph of $\phi$ 
is a time-like ZMC-surface in $\mb L^3$.
In both of the two cases, it can be easily checked that
($\epsilon:=\op{sign}(\mu)\in \{1,-1\}$)
$$
\pmt{\psi_x \\ \psi_y}
=\frac{1}{\sqrt{\phi_x^2+\phi_y^2+\epsilon}}\pmt{-
\phi_y \\ \phi_x}
$$
holds, where
$
\psi:=\Psi(\tilde \mu x,\tilde \mu y)/{\tilde \mu}.
$
Note that $\Psi$ satisfies
\eqref{eq:s3} if and only if 
$\psi$ satisfies
\eqref{eq:s3}.
Moreover, one can easily check that
\begin{equation}\label{eq:hat-rho}
(\hat{\rho}:=)\frac{1}{\sqrt{\phi_x^2+\phi_y^2+\epsilon}}=\sqrt{\epsilon (1-\psi_x^2-\psi_y^2)}.
\end{equation}
This means that
$
\phi \longleftrightarrow \psi
$
corresponds to the duality between potentials
and stream functions of Chaplygin gas flow such that
\begin{itemize}
\item
$\mu=\pm 1(=\epsilon)$, 
\item the density $\hat \rho$ is given as
\eqref{eq:hat-rho}, and 
\item $p=p_0-1/\hat \rho$ for some constant $p_0$.
\end{itemize}
This gives a correspondence 
between graphs of minimal surfaces in $\mb E^3$
and graphs of maximal surfaces in $\mb L^3$ 
(resp.~an involution
on the set of graphs of time-like ZMC-surfaces 
in $\mb L^3$) 
which we call the {\it fluid mechanical duality}.

A part of the above dualities is 
suggested in the classical book \cite{B}.
Calabi \cite{C} also recognized this duality for $\mu>0$, 
and pointed out the following:

\begin{fact}[Calabi's Bernstein-type theorem]\label{fact:C}
Suppose that the graph of a function $\psi\colon \R^2\to \R$ gives
a maximal surface $($that is,
a surface consisting only of space-like points whose 
mean curvature function vanishes identically$)$.
Then $\psi-\psi(0,0)$ is linear.
\end{fact}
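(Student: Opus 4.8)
The plan is to exploit the fluid mechanical duality established in the Introduction to reduce the statement to Bernstein's classical theorem for minimal graphs in $\mb E^3$. Since the graph of $\psi$ is a maximal surface, every point is space-like, so $B_\psi = 1-\psi_x^2-\psi_y^2 > 0$ everywhere; in particular the hypotheses that led to \eqref{eq:rho} are satisfied with $\mu > 0$, and after the normalization \eqref{eq:mu1} we may assume $\mu = 1$. First I would invoke the duality: from $\psi$ one constructs the dual potential $\phi$ via the relation
\begin{equation*}
\pmt{\psi_x \\ \psi_y}
=\frac{1}{\sqrt{\phi_x^2+\phi_y^2+1}}\pmt{-\phi_y \\ \phi_x},
\end{equation*}
equivalently $(\phi_x,\phi_y) = \bigl(\psi_y,-\psi_x\bigr)/\sqrt{1-\psi_x^2-\psi_y^2}$, where the square root is real and positive precisely because $B_\psi>0$. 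By the discussion preceding Fact~\ref{fact:C}, $\phi$ satisfies the minimal surface equation \eqref{eq:c3a}, i.e. its graph is a minimal surface in $\mb E^3$.

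The key step is then to verify that $\phi$ is defined on all of $\R^2$, i.e. that it is an \emph{entire} solution of \eqref{eq:c3a}. This is where the argument is not purely formal: a priori the duality only produces $\phi$ locally (it is defined through an integration of the one-form $\psi_y\,dx - \psi_x\,dy$ divided by $\sqrt{B_\psi}$, which requires checking closedness, and global existence then needs the simple connectivity of $\R^2$). Closedness of that one-form is exactly the ZMC-equation \eqref{eq:s3} for $\psi$, so $\phi$ is globally well-defined on $\R^2$ up to an additive constant. I would then apply the classical Bernstein theorem: an entire solution of the minimal surface equation \eqref{eq:c3a} on $\R^2$ is affine, so $\phi - \phi(0,0)$ is linear, say $\phi(x,y) = ax + by + \phi(0,0)$ with constants $a,b$.

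Finally I would run the duality backwards. With $\phi_x = a$, $\phi_y = b$ constant, the displayed formula gives
\begin{equation*}
\pmt{\psi_x \\ \psi_y} = \frac{1}{\sqrt{a^2+b^2+1}}\pmt{-b \\ a},
\end{equation*}
so $\psi_x$ and $\psi_y$ are constants; integrating, $\psi - \psi(0,0)$ is linear, as claimed. The main obstacle I anticipate is the entireness/global-defined-ness step for $\phi$: one must be careful that the dual surface does not develop singularities or fail to extend, using that $B_\psi$ is strictly positive (never zero, since there are no light-like points) on the whole plane, together with the simple connectivity of $\R^2$; once $\phi$ is known to be an entire minimal graph, the rest is an immediate application of Bernstein's theorem and a short computation. (One should also double check that the classical Bernstein theorem is being applied in the form \lq\lq entire solution of \eqref{eq:c3a} on $\R^2$ is affine\rq\rq, which is precisely its standard statement.)
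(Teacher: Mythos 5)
Your proposal is correct, and it is essentially the argument the paper has in mind: the introduction builds exactly this fluid mechanical duality and attributes the observation (for $\mu>0$) to Calabi, stating the result as a Fact cited to \cite{C} without reproving it. Your handling of the one genuinely nontrivial point — that closedness of $(\psi_y\,dx-\psi_x\,dy)/\sqrt{B_\psi}$ is the divergence form of \eqref{eq:s3} (valid since $B_\psi>0$), so that $\phi$ is globally defined on the simply connected plane and classical Bernstein applies — is accurate.
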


This is an analogue of the classical Bernstein theorem 
for minimal surfaces in $\mb E^3$. 
Moreover, Calabi \cite{C} obtained 
the same conclusion for entire space-like ZMC-graphs 
in $\mb L^{n+1}$ ($n\leq 4$), 
and Cheng and Yau \cite{CY} 
extended this result for complete maximal hypersurfaces 
in $\mb L^{n+1}$ for $n\ge 5$. 
The assumption that the graph consists only of space-like points
is crucial. Entire ZMC-graphs which are not planar
actually exist.
Typical such examples are 
of the form
\begin{equation}\label{eq:T-Graph}
\psi_0(x,y):=y+g(x),
\end{equation}
where $g\colon\R\to \R$ is a $C^\infty$-function of one variable.
A point $p=(x_0,y_0)\in \R^2$ is a light-like point of $\psi_0$
if and only if $g'(x_0)=0$.
However, $\psi_0$ does not
contain any space-like points.
The potential function $\phi_0$ corresponding to $\psi_0$ is
given by
$$
\phi_0(x,y)=y-\int_0^x \frac{du}{g'(u)} 
$$
up to a constant.
On the other hand, 
Osamu Kobayashi \cite{K} pointed out 
the existence of entire graphs 
of ZMC-surfaces with space-like points,
light-like points and time-like points all appearing. 
Such a surface is called of {\it mixed type}.
Recently, many such examples
are constructed in \cite{Haw}.

By definition, any entire ZMC-graph 
of mixed type has at least
one light-like point. So we give the
following definition:

\begin{definition}
A light-like point $p$ of the function $\psi$ 
(i.e. $B_\psi(p)=0$)
is said to be
{\it non-degenerate} (resp.~{\it degenerate})
if $\nabla B_\psi$ does not vanish
(resp.~vanishes) at $p$.	
\end{definition}

At each non-degenerate light-like point, 
the graph of $\psi$ changes its causal type
from space-like to time-like.
This case is now well-understood.
In fact, under the assumption that the surface
is real analytic, it can be reconstructed 
from a real analytic null regular curve in $\mb L^3$ 
(cf.\ Gu \cite{G} and also \cite{Okayama,Kl,KKSY}).

On the other hand, 
there are several examples of ZMC-surfaces 
with degenerate light-like points
(cf. \cite{A,A3,CR,HK}).
Moreover, a local general existence theorem for maximal surfaces 
with degenerate light-like points is given in \cite{UY2}.
For  such degenerate light-like points,
we need a new approach to analyze the behavior of 
$\psi$ and $\phi$. The following fact was 
proved by Klyachin \cite{Kl} (see also \cite{UY2}).

\begin{fact}[The line theorem for ZMC-surfaces]\label{fact:IL}
Let $D$ be a domain of $\R^2$ and
$F\colon D\to \mb L^{3}$ a $C^3$-differentiable ZMC-immersion such that 
$o\in D$ is a degenerate light-like point.
Then, there exists a light-like line segment $\hat \sigma\, 
(\subset \mb L^{3})$ 
passing through $F(o)$ of $\mb L^{3}$ such that
$o$ does not coincides with one of  the two end points of  $\hat \sigma$ and 
$F(\Sigma)$ contains $\hat \sigma$, where $\Sigma$ is the set of
degenerate light-like points of $F$.
\end{fact}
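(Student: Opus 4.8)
The plan is to realize the desired light-like segment as the $F$-image of the integral curve of the gradient field $\nabla\psi$ starting at $o$, and to pin down its geometry by three successive applications of uniqueness for a linear ODE. First I would pass to a local graph representation: at a light-like point the tangent plane of $F$ is a degenerate (hence light-like) $2$-plane of $\mb L^{3}$, and such a plane contains no time-like vector, so in particular it is transverse to the $t$-axis; thus the image of $F$ near $o$ is a graph $t=\psi(x,y)$ of a $C^{3}$ function $\psi$ satisfying \eqref{eq:s3}, with $B_\psi(o)=0$ (so $|\nabla\psi(o)|=1$, $|\cdot|$ the Euclidean norm on $\R^{2}$) and $\nabla B_\psi(o)=0$. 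Writing $\Delta:=\partial_x^{2}+\partial_y^{2}$, a direct computation recasts \eqref{eq:s3} as the identity
\[
\nabla\psi^{\top}(\op{Hess}\psi)\,\nabla\psi=-B_\psi\,\Delta\psi,
\qquad\text{equivalently}\qquad
\nabla\psi\cdot\nabla B_\psi=2\,B_\psi\,\Delta\psi ,
\]
and one also records the elementary identity $\nabla B_\psi=-2\,(\op{Hess}\psi)\,\nabla\psi$.

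Let $\gamma$ be the solution of $\gamma'(s)=\nabla\psi(\gamma(s))$, $\gamma(0)=o$, on its maximal interval (the field $\nabla\psi$ is $C^{2}$, so $\gamma$ is unique). \emph{Step 1.} Along $\gamma$ one has $\tfrac{d}{ds}\bigl(B_\psi\circ\gamma\bigr)=\nabla B_\psi\cdot\gamma'=\nabla B_\psi\cdot\nabla\psi=2(\Delta\psi)(B_\psi)\big|_{\gamma}$; since $B_\psi(\gamma(0))=0$, uniqueness for this linear scalar ODE gives $B_\psi\equiv0$ along $\gamma$, i.e.\ $\gamma$ consists of light-like points and $|\nabla\psi|\equiv1$ on it. \emph{Step 2.} Put $\beta(s):=\nabla B_\psi(\gamma(s))$; differentiating $\nabla B_\psi=-2(\op{Hess}\psi)\nabla\psi$ along $\gamma$ brings in the third-order term $\bigl(\sum_{j,k}\psi_{ijk}\psi_{j}\psi_{k}\bigr)_{i}$, which — by differentiating once the ZMC identity $\nabla\psi^{\top}(\op{Hess}\psi)\nabla\psi=-B_\psi\Delta\psi$ and then using $B_\psi\equiv0$ on $\gamma$ from Step 1 — becomes a linear expression in $\nabla B_\psi$. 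The outcome is a linear ODE $\beta'(s)=M(s)\beta(s)$ with $M(s)=2(\Delta\psi)I-\op{Hess}\psi$ evaluated along $\gamma$, continuous since $\psi\in C^{2}$; as $\beta(0)=\nabla B_\psi(o)=0$, uniqueness forces $\beta\equiv0$, so every point of $\gamma$ is a degenerate light-like point and $\gamma$ lies in the set $\Sigma$ of degenerate light-like points of $F$.

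\emph{Step 3.} Now $(\op{Hess}\psi)\nabla\psi=-\tfrac12\nabla B_\psi\equiv0$ on $\gamma$ gives $\tfrac{d}{ds}\bigl(\nabla\psi\circ\gamma\bigr)=0$, so $\nabla\psi\equiv\vect{a}:=\nabla\psi(o)$ (a unit vector) along $\gamma$; hence $\gamma(s)=o+s\,\vect{a}$ is a straight segment and $\tfrac{d}{ds}\psi(\gamma(s))=|\nabla\psi|^{2}=1$, so $\psi(\gamma(s))=\psi(o)+s$. Therefore $F(\gamma(s))=F(o)+s\,\vect{w}$ with $\vect{w}:=(\vect{a},1)$, and $\inner{\vect{w}}{\vect{w}}=|\vect{a}|^{2}-1=0$, so $\hat\sigma:=F\circ\gamma$ is a light-like line segment through $F(o)=F(\gamma(0))$ contained in $F(\Sigma)$, with $0$ interior to the parameter interval of $\gamma$, whence $F(o)$ is not an endpoint of $\hat\sigma$. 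The main obstacle is Step 2: one must show that the a priori uncontrolled third-order quantity $\sum_{j,k}\psi_{ijk}\psi_{j}\psi_{k}$ is genuinely \emph{linear} in $\nabla B_\psi$ along $\gamma$ — this is precisely where the ZMC equation (together with $B_\psi\equiv0$ on $\gamma$) is indispensable, and the computation must be arranged so as to differentiate \eqref{eq:s3} only once, keeping everything within the $C^{3}$ hypothesis.
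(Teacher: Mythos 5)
The statement you are proving is labelled a \emph{Fact} in the paper: it is quoted from Klyachin \cite{Kl} (see also \cite{UY2}) and the paper itself contains no proof, so there is no internal argument to compare against line by line. Your proposal is, as far as I can check, a correct and self-contained proof, and it is in the spirit of the characteristic-curve/ODE-uniqueness method of \cite{UY2,UY3} rather than Klyachin's original argument. The key identities all check out: expanding \eqref{eq:s3} and using $v^{\top}Hv+(Jv)^{\top}H(Jv)=|v|^{2}\operatorname{tr}H$ for $Jv=(-v_2,v_1)$ gives exactly $\nabla\psi^{\top}(\op{Hess}\psi)\nabla\psi=-B_\psi\Delta\psi$, and $\nabla B_\psi=-2(\op{Hess}\psi)\nabla\psi$ is immediate, so Step 1 is a clean Gronwall/linear-ODE uniqueness argument. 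In Step 2 I verified that differentiating the identity once and restricting to $\gamma$ (where $B_\psi\equiv0$ kills the $B_\psi\,\partial_i\Delta\psi$ term, which is the only place a fourth-order-looking obstruction could enter) yields $\sum_{j,k}\psi_{ijk}\psi_j\psi_k=-\beta_i\Delta\psi-2(H^2\nabla\psi)_i$ and hence $\beta'=\bigl(2(\Delta\psi)I-\op{Hess}\psi\bigr)\beta$, exactly as you claim; this stays within $C^3$. Step 3 then correctly identifies $\nabla\psi$ as constant along $\gamma$ (note $\nabla\psi$ spans the null direction of the degenerate induced metric at light-like points, so your $\gamma$ is indeed the characteristic curve), giving the light-like segment $F(o)+s(\vect{a},1)$ with $0$ interior to the parameter interval. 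Two minor points worth a sentence each in a polished write-up: (i) the reduction to a graph over the $xy$-plane uses that the induced metric at a light-like point is positive semi-definite (trace $1+B_\psi=1$, determinant $B_\psi=0$), so the tangent plane contains no time-like vector and in particular not $(0,0,1)$; (ii) one should remark that the ZMC equation and the degeneracy condition $\nabla B_\psi(o)=0$ are unaffected by re-expressing the surface as a graph over the $xy$-plane, since the paper's definitions are phrased in terms of a graph after ``a suitable motion.''
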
 

Recently, Fact \ref{fact:IL}  was generalized to a 
much wider class of surfaces, including constant 
mean curvature surfaces in $\mb L^3$, see \cite{UY2,UY3}.
(In \cite{UY2}, the general local existence theorem
of surfaces which changes their causal types
along degenerate light-like lines was also shown.)
The asymptotic behavior of $\psi$ along 
the line $l$ consisting of degenerate light-like points
is discussed in \cite{UY2}.

The purpose of this paper is to prove 
the following assertion:

\medskip
\noindent
{\bf Theorem A.}\label{thm:main}
{\it An entire $C^3$-differentiable ZMC-graph 
which is not a plane
admits a non-degenerate
light-like  point if its space-like part is
non-empty.}
\medskip

This assertion is proved in Section 2
using the fluid mechanical duality
and the half-space theorem for minimal 
surfaces in $\mb E^3$ given by 
Hoffman-Meeks \cite{HM}.
It should be remarked that 
the half-space theorem 
does not hold for time-like ZMC-surfaces.
In fact, the graph of $\phi(x,y):=y+\log{(\tan{x})}$ $(x\in (0,\pi/2))$
gives a properly embedded time-like ZMC-surface
lying between two parallel vertical planes. 
In Section 2, we give further examples, and
provide 
 a few questions related to Theorem A.
As an application, we give the following improvement of
Calabi's Bernstein-type theorem:

\medskip
\noindent
{\bf Corollary B.}
\label{cor:main}
{\it An entire  $C^3$-differentiable ZMC-graph 
which does not admit any time-like points is a plane.}

\medskip
In fact, if the ZMC-graph admits  a
space-like point, then the assertion immediately
follows from Theorem A.
So it remains to show the case that the graph consists
only of light-like points. 
However, such a graph must be a plane,
as shown in the appendix (see
Theorem \ref{cor:A}).

\section{Proof of Theorem A}

In this section,  we prove Theorem A in the introduction.
We let
$
\psi\colon\R^2\to \R
$
be a $C^3$-function satisfying the ZMC-equation \eqref{eq:s3}.
We assume $\psi$ admits a space-like point $q_0\in \R^2$,
but admits no non-degenerate light-like points. 
By Calabi's Bernstein-type theorem (cf. Fact \ref{fact:C}),
$\psi$ has at least one degenerate light-like point.
We set
$$
F_\psi(x,y):=(x,y,\psi(x,y)),
$$ 
which gives the ZMC-graph of $\psi$. 
We denote by $ds^2$ the positive semi-definite metric
which is the pull-back of the canonical Lorentzian metric of $\mb L^3$
by $F_\psi$.
The line theorem (cf. Fact \ref{fact:IL})
yields that the image of $F_\psi$ contains a light-like line segment $\hat \sigma$.
Then the projection of $\hat \sigma$ is a line segment $\sigma$ on the 
$xy$-plane $\R^2$. Then $\sigma$ lies on a line $l$ on $\R^2$.
If $\sigma\ne l$, then there exists an end point $p$
of $\sigma$ on $l$. Since $p$ is the limit point
of degenerate light-like points, $p$ itself is also a
degenerate light-like point.
By applying the line theorem,  there exists a light-like line segment 
$\hat \sigma'$ containing $F_\psi(p)$ as its interior point.
We denote by $\sigma'$ 
the projection  of $\hat \sigma'$ to the $xy$-plane.
Since the null direction at $p$ with respect to the metric $ds^2$
is uniquely determined,
$\sigma'$ also lies on the line $l$. 
Thus, the entire graph contains a whole light-like line
containing $\hat \sigma$.
In particular, degenerate light-like points on the graph
consist of a family of straight lines in $\R^2$.

Let $l$ and $l'$ are two such straight lines.
Then $l'$ never meets $l$.
In fact, if not, 
then there is a unique intersection point
$q\in l\cap l'$.
By Fact \ref{fact:IL}, two lines $l,l'$ can be
lifted to two light-like lines $\tilde l$
and $\tilde l'$ in $\mb L^3$ passing through $F_\psi(q)$.
The tangential
directions of $\tilde l$ and $\tilde l'$ 
are linearly independent light-like vectors
at $F_\psi(q)$. Then by \cite[Lemma 27 in Section 5]{ON},
$q$ is a time-like point, a contradiction.

Thus, the set of degenerate light-like points of $F_\psi$
consists of a family of parallel lines in the $xy$-plane.
Without loss of generality, we may assume that
these lines are vertical and one of them is the $y$-axis. 
Then we can find a domain ($\Delta\in (0,\infty]$)
$$
\Omega:=\{(x,y)\,;\, 0<x<2\Delta\}
$$
such that $q_0\in \Omega$ and
$F_\psi$ has no light-like points on $\Omega$
and both of the lines $l=\{x=0\}$ and $l'=\{x=2\Delta\}$ 
consist of light-like points unless $\Delta=\infty$. 
Since there are no light-like points on $\Omega$,
the potential function $\phi\colon\Omega\to \R$
is induced by $\psi$ as the fluid mechanical dual.
The graph of $\phi$ is a minimal surface in $\mb E^3$.
In particular, $\phi$ is real analytic.
If we succeed to prove that 
the map
$F_\phi(x,y):=(x,y,\phi(x,y))$
is proper, then Theorem A follows.
In fact,  by the half-space theorem given in \cite{HM}
the image $F_\phi(\Omega)$ lies in
a plane in $\mb E^3$. 
Then the map
$
F_\psi(x,y)
$
also lies in a plane $\Pi$ in $\mathbf L^3$ on $\overline{\Omega}$.
Since $F_\psi(l)$ is light-like,
the plane $\Pi$ must be light-like,
contradicting the the fact $q_0\in \Omega$.

To prove the properness of $F_\phi$,
it is sufficient to show the following:

\begin{lemma}
Let $\{p_n\}_{n=1}^\infty$ be a sequence of points
in $\Omega$ accumulating to a point on $l$ or $l'$.
Then $\{|\phi(p_n)|\}_{n=1}^\infty$ diverges.
\end{lemma}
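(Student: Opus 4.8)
The plan is to study the behaviour of the dual data along a line of degenerate light-like points and show that the potential $\phi$ must blow up as one approaches it. Fix one of the boundary lines, say $l=\{x=0\}$, and let $\{p_n\}$ accumulate to a point $q\in l$. On $\Omega$ we have the Chaplygin duality, with $\mu<0$ on the maximal side, so $\hat\rho=1/\sqrt{\phi_x^2+\phi_y^2+\epsilon}=\sqrt{\epsilon(1-\psi_x^2-\psi_y^2)}$ with $\epsilon=-1$; here $1-\psi_x^2-\psi_y^2=B_\psi<0$ on $\Omega$ and $B_\psi=0$ on $l$. The first step is to quantify the rate at which $B_\psi\to 0$: by the line theorem (Fact \ref{fact:IL}) and its refinement in \cite{UY2} on the asymptotic behaviour of $\psi$ along a line of degenerate light-like points, $B_\psi$ vanishes to high order along $l$, so $\phi_x^2+\phi_y^2\to\infty$ as one approaches $l$, i.e. $|\nabla\phi|$ is unbounded near $l$.

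Next I would turn the blow-up of the gradient into blow-up of the value. The dual relations give, up to the scaling \eqref{eq:mu1}, $\phi_x=-\psi_y/\hat\rho$ and $\phi_y=\psi_x/\hat\rho$ (from $(\psi_x,\psi_y)=\hat\rho(-\phi_y,\phi_x)$), so along a horizontal segment $\{y=y_n\}$ running from a fixed interior curve toward $l$ we have $\phi(x,y_n)=\phi(x_0,y_n)-\int_x^{x_0}\psi_y(t,y_n)/\hat\rho(t,y_n)\,dt$. Since $\psi$ is $C^3$ on $\overline\Omega$ its derivatives are bounded on compacta, while $\hat\rho\to 0$; the task is to control the sign of the integrand so the contributions do not cancel. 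For the model family $\psi_0(x,y)=y+g(x)$ with $g'(0)=0$ one computes $\phi_0(x,y)=y-\int_0^x du/g'(u)$, which indeed diverges; the general case should follow the same mechanism because along the vertical line of degenerate points the null direction of $ds^2$ is horizontal, forcing $\psi_x\to 0$ there, so $\psi_y\to\pm1$ and $\psi_y/\hat\rho$ keeps a definite sign near $l$. Thus $|\phi(x,y_n)|\to\infty$ as $x\to 0^+$, uniformly enough in $y_n$ to conclude $|\phi(p_n)|\to\infty$.

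I expect the main obstacle to be precisely this sign/non-cancellation control: one must rule out that $\psi_y$ changes sign infinitely often near $l$ or that the accumulation point $q$ sits where $\psi_y$ itself vanishes, which would weaken the integrand. To handle this I would invoke the structure theorem of \cite{UY2} describing $\psi$ near a degenerate light-like line — in suitable coordinates $\psi(x,y)=\pm y + (\text{higher order in }x)$ with $B_\psi$ of one sign on each side — which pins down both the order of vanishing of $\hat\rho$ and the sign of $\psi_y$ in a one-sided neighbourhood of $l$, independently of the $y$-coordinate. A secondary technical point is uniformity in $n$: the estimate should be made on a fixed strip $0<x<\delta$, $|y-y(q)|<\delta$, using that $\psi\in C^3(\overline\Omega)$ so all the bounds on $\psi$ and its derivatives are uniform there, after which the divergence of $\int_0^\delta dt/\hat\rho(t,y)$ (a consequence of the high-order vanishing of $\hat\rho$) gives the conclusion. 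The symmetric argument applies verbatim at $l'=\{x=2\Delta\}$ when $\Delta<\infty$.
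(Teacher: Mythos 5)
Your proposal matches the paper's proof in all essentials: both rest on the expansion $\psi=y+x^2h(x,y)$ (with $h$ of class $C^1$) from \cite{UY2} near the degenerate light-like line, which gives $\rho=\sqrt{1-\psi_x^2-\psi_y^2}=O(|x|)$ and $\psi_y$ bounded below by a positive constant on a small one-sided strip, and both then integrate $\phi_x=\psi_y/\rho$ along a horizontal segment toward $l$ (absorbing the vertical displacement into a bounded term) so that the divergence of $\int dx/\rho$ forces $|\phi(p_n)|\to\infty$. One correction of conventions: on $\Omega$ the graph is space-like, so $B_\psi>0$, $\mu>0$ and $\epsilon=+1$ there (the dual is a minimal surface in $\mb E^3$, which is exactly what the half-space theorem requires afterwards), not $\mu<0$, $\epsilon=-1$, $B_\psi<0$ as you wrote; this sign slip does not affect the divergence mechanism of your argument.
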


\begin{proof}
By switching the roles of $l$ and $l'$ if necessary,
it is sufficient to consider the case that
$\{p_n\}_{n=1}^\infty$ accumulates to a point on $l$.
Taking a subsequence and using a suitable
translation of the $xy$-plane, we may
assume that $\{p_n\}_{n=1}^\infty$ 
converges to the origin $(0,0)\in l$
and  $p_n=(x_n,y_n)$ $(n=1,2,3,...)$ satisfy
the following properties:
\begin{itemize}
\item there exists $\epsilon>0$ such that
$|y_n|<\epsilon$ for each $n=1,2,\dots$
and 
\item there exists $(\delta,0)\in \Omega$ 
($\delta>0$) such that
$$
\delta>x_1>x_2 >\cdots >x_n>x_{n+1}> \cdots.
$$
\end{itemize}
Since $l$ consists of
degenerate light-like points, 
there exists a neighborhood $U$ of $(0,0)$  such that
(see \cite{CR} or \cite[(6.1)]{UY2})
\begin{equation*}
\psi(x,y)=y + x^2 h(x,y) \qquad ((x,y)\in U),
\end{equation*}
where $h(x,y)$ is a  $C^1$-differentiable
function defined on $U$ (see \cite[Appendix A]{UY2}).
Taking $\epsilon,\delta$ to be sufficiently small,
we may assume that
$$
V:=\{(x,y)\in \Omega\,;\, |x|\le \delta,\,\, |y|<\epsilon\}\subset U.
$$ 
Since $B_\psi>0$,
the potential function $\phi$
associated to $\psi$ satisfies
(cf. \eqref{eq:hat-rho})
$$
\phi_x=\frac{\psi_y}{\rho}, 
\qquad \rho=\sqrt{1-\psi_x^2-\psi_y^2}.
$$
Since
$$
1-\psi_x^2-\psi_y^2
=-x^2\biggl((2h+xh_x)^2+2h_y+ x^2 h_y^2\biggr)
$$
is non-negative on the closure $\overline{V}$
of $V$, we can write
\begin{equation}\label{eq:def-k}
\sqrt{\rho}=|x| k(x,y), 
\end{equation}
where $k(x,y)$ is a 
non-negative continuous 
function defined on $\overline{V}$
such that $k$ is positive-valued
on $V$.
We set
$
p_0:=(\delta,0),
$
and consider the path
$\gamma_n\colon[0,1]\to V$
defined by
$\gamma_n(s):=(\delta,2s y_n)$
if $0\le s\le 1/2$ and 
$$
\gamma_n(s):=(2(x_n-\delta)s-x_n+2\delta,y_n) 
$$
if $1/2\le s\le 1$,  
which starts at $p_0$ and terminates at $p_n$.
This curve $\gamma_n$ is the union of the vertical subarc $\gamma_{n,1}$ and
the horizontal subarc $\gamma_{n,2}$. 
So we can write 
\begin{align*}
\phi(p_n)-\phi(p_0)&=\int_{\gamma_n}\phi_x dx +\phi_y dy\\
&=\int_{\gamma_{n,2}}\phi_x dx +\int_{\gamma_{n,1}} \phi_y dy.
\end{align*}
Since $[-\epsilon,\epsilon]\ni y \mapsto \phi_y(\delta,y)\in \R$
is a continuous function, we have that
\begin{align*}
\left|\int_{\gamma_{n,1}} \phi_y dy\right|
&\le  \int_{\gamma_{n,1}} \biggl|\phi_y(\delta,2t y_n)\biggr|\, |dy| \\
&\le \epsilon \max_{|y|\le \epsilon}\,\biggl|\phi_y(\delta,y)\biggr|<\infty.
\end{align*}
So to prove the lemma, it
is sufficient to show that
$\int_{\gamma_{n,2}} \phi_x dx$ diverges as $n\to \infty$.
We set
$$
m:=\max_{x\in [0,\delta],\,\, |y|\le \epsilon}k(x,y)\,\,(\ge 0),
$$
where $k$ is the continuous function given in \eqref{eq:def-k}.
On the other hand, we can  take a 
constant $m'(>0)$
such that
$$
\psi_y=1+x^2h_y(x,y)>m'
\qquad (x\in [0,\delta],\,\, |y|\le \epsilon),
$$
since  $\epsilon,\delta$ can be chosen to be sufficiently small. 
Since $\phi_x=\psi_y/\rho$, we have
\begin{align*}
\left|\int_{\gamma_{n,2}}\phi_x dx \right| 
&=
\int_{x_n}^{\delta} \frac{1+x^2h_y(x,y)}{x^2k^2(x,y)} dx \\
&>
\frac{m'}{m^2}\int_{x_n}^{\delta} \frac{dx}{x^2}
=
\frac{m'}{m^2}\left(\frac{1}{x_n}-\frac{1}{\delta}\right)\to \infty
\end{align*}
proving the assertion.
\end{proof}

\begin{remark}
In the above proof, we showed that $F_\psi(\Omega)$ 
lies in a plane using the fluid mechanical duality.
We remark here that this
can be proved by a different method.
In fact, $\psi$ satisfies the assumption of 
Ecker \cite[Theorem G]{E} or is an 
PS-graph on the convex domain $\Omega$ in the
sense of Fernandez and Lopez \cite{FL}. 
Thus, we can conclude that
$\psi(\Omega)$ lies in a light-like plane.
\end{remark}

\begin{figure}[htb]
 \begin{center}
   \begin{tabular}{c@{\hspace{0.5cm}}c}
        \includegraphics[height=2.5cm]{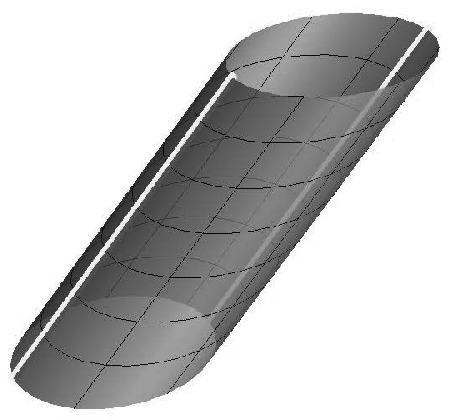} &
        \includegraphics[height=2.5cm]{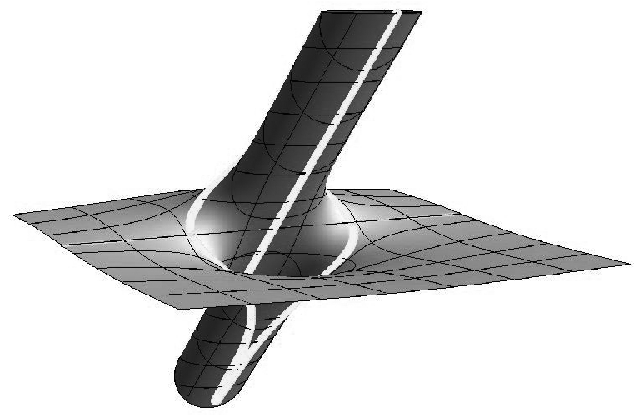} 
    \end{tabular}
\caption{The ZMC-surfaces in Example \ref{ex:1} (left) 
and in Example \ref{ex:2} (right), where 
the white lines 
indicate light-like points.}
\label{fig:0}
 \end{center}
\end{figure}

In \cite{A}, the first author constructed several
ZMC-surfaces  foliated by circles and at most countably many straight lines.
At the end of this paper, we pick up two
important examples of them which contain degenerate 
light-like points. (In \cite{A}, these two examples 
are not precisely indicated. Here we show their 
explicit parametrization and embeddedness.)

\begin{example}[{\cite[Figure 5]{A}}]\label{ex:1}
We set
$$
F(u,v):=(u+a \cos v, a \sin v,u),
$$
where $a>0$ and 
$(u,v)\in \R\times [0,2\pi)$.
Then the image of $F$ contains two parallel 
degenerate light-like lines which correspond to
the special values $\theta=\pm \pi/2$ 
(see Figure \ref{fig:0}, left).
The image of $F$ can be characterized by the
implicit function
$
(x-t)^2+y^2=a^2.
$
This ZMC-surface 
is properly embedded and is not simply connected.
\end{example}

\begin{example}[{\cite[Figure 2]{A}}]\label{ex:2}
We set
$$
F(r,\theta):=\biggl(r+
\frac1{2a}\log\left(\frac{ar-1}{ar+1}\right)+r \cos \theta, 
r\sin \theta, \frac1{2a}\log\left(\frac{ar-1}{ar+1}\right)
\biggr),
$$
where $a>0$ and 
$\theta\in [0,2\pi)$.
This map is defined for $r>1/a$ or $r<-1/a$, 
and the closure of the image of
$F=(x,y,t)$ can be expressed as
$$
(\Psi:=)a \sinh (a t) 
\left((x-t)^2+y^2\right)
+2 (x-t) \cosh (a t)=0.
$$
It can be checked that 
$(\Psi_x,\Psi_y,\Psi_t)$ never vanishes along $\Psi=0$.
So the closure of $F$ gives a
properly embedded ZMC-surface in $\mb L^3$
(see Figure \ref{fig:0}, right).
\end{example}

Regarding our main result, we state a few open problems:

\medskip
\noindent
{\bf (Question 1.)}
{\it Does a properly embedded ZMC-surface which 
consists only of space-like or light-like points
coincide with a plane?
}

\medskip
If this question is affirmative, then Corollary B
follows as a corollary.
Suppose that we can find such a non-planar
ZMC-surface $S$, it must contain a light-like line. In fact, if 
$S$ consists only of space-like points,
then $S$ is complete, and such a surface
must be a plane (see \cite[Remark 1.2]{UY1}).
So $S$ has a light-like point $p$.
If $p$ is non-degenerate, then $S$ has
a time-like point near $p$, so
$p$ must be degenerate. 
By the line theorem (Fact \ref{fact:IL}),
$S$ must contain a
light-like line consisting of degenerate
light-like points.

\medskip
\noindent
{\bf (Question 2.)}
{\it Are there entire ZMC-graphs of mixed type
containing degenerate light-like points?}

\medskip
This question needs to consider 
ZMC-graphs of mixed type. In fact,
if we choose a function $g(x)$ satisfying $g'(0)=0$
as in \eqref{eq:T-Graph}, 
then the $y$-axis consists of the degenerate 
light-like  points.
If we weaken \lq entire ZMC-graphs\rq\ to
\lq properly embedded ZMC-surfaces of mixed type\rq\,
the answer is \lq yes\rq. 
In fact, Example \ref{ex:2}
gives a properly embedded ZMC-surface of mixed type 
which contains a degenerate light-like line $L$.
Although  the space-like points 
never accumulate to $L$ in the case of this example, 
one can show 
the existence of a function $\psi :U\to \R$
defined on a domain $U$ in $\R^2$
containing the $y$-axis 
such that 
\begin{itemize}
\item the $y$-axis corresponds to  a degenerate light-like 
line,
\item $\psi$ is of mixed type, 
or consisting only of space-like points except along the $y$-axis.  
\end{itemize}
See \cite{AUY} for details.
Also, the following question arises: 

\medskip
\noindent
{\bf (Question 3.)}
{\it Are there entire ZMC-graphs of mixed type
which are not obtained as analytic extensions of 
Kobayashi surfaces given as in \cite{Haw}?
}

\medskip
In fact, all  known examples of entire 
ZMC-graphs of mixed type are 
obtained as analytic extensions of 
Kobayashi surfaces (cf.~\cite{Haw}),
and they admit only non-degenerate light-like points. 

\medskip
\appendix
\section{A property of light-like surfaces in $\mb L^3$}

It can be easily checked that
an embedded surface $S(\subset \mb L^3)$ is light-like
if and only if 
the restriction of the canonical
Lorentzian metric on $\mb L^3$
to the tangent space $T_pS$ of
each $p\in S$ is positive semi-define but not
positive definite. 
The purpose of this appendix is to prove the following:

\begin{theorem}\label{cor:A}
If an entire $C^2$-differentiable 
graph of $\psi\colon\R^2\to \R$
gives a light-like surface in $\mb L^3$,
then $\psi-\psi(0,0)$ is a linear function.
\end{theorem}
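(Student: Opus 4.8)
The plan is to translate the hypothesis into the eikonal equation and then run a characteristic-line argument. By definition, the graph $t=\psi(x,y)$ being a light-like surface means that $B_\psi\equiv 0$ on all of $\R^2$; that is, $\psi$ satisfies
\[
\psi_x^2+\psi_y^2\equiv 1 .
\]
In particular the gradient vector field $\nabla\psi=(\psi_x,\psi_y)$ takes values in the unit circle $S^1\subset\R^2$ and is nowhere zero. The goal is to show that $\nabla\psi$ is in fact a constant vector, which then forces $\psi(x,y)=ax+by+\psi(0,0)$.

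First I would differentiate the identity $\psi_x^2+\psi_y^2=1$ in $x$ and in $y$; this is legitimate since $\psi\in C^2$, and it yields
\[
\pmt{\psi_{xx} & \psi_{xy} \\ \psi_{xy} & \psi_{yy}}\pmt{\psi_x \\ \psi_y}=\pmt{0\\0},
\]
so the Hessian matrix of $\psi$ kills $\nabla\psi$ at every point. Next, because $\nabla\psi$ is $C^1$, through each $p\in\R^2$ there passes a unique maximal integral curve $\gamma$ of $\nabla\psi$, and along it
\[
\frac{d}{dt}\bigl(\nabla\psi(\gamma(t))\bigr)=(\op{Hess}\psi)(\gamma(t))\,\dot\gamma(t)=(\op{Hess}\psi)(\gamma(t))\,\nabla\psi(\gamma(t))=0 ,
\]
so $\nabla\psi$ is constant along $\gamma$, equal to $v:=\nabla\psi(p)$. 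Hence $\dot\gamma\equiv v$ and $\gamma(t)=p+tv$; since this never escapes $\R^2$, the maximal interval is all of $\R$. Thus $\nabla\psi\equiv v$ on the entire straight line $L_p:=\{\,p+tv\ ;\ t\in\R\,\}$. It is crucial here that $\psi$ be \emph{entire}, so that $L_p$ is a complete line rather than only a segment.

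To conclude, suppose $\nabla\psi$ is not constant. Then $\nabla\psi(\R^2)$ is a connected subset of $S^1$ with more than one point, hence contains two linearly independent unit vectors $v=\nabla\psi(p)$ and $w=\nabla\psi(q)$. The lines $L_p$ and $L_q$ produced above then have linearly independent directions, so they meet at some point $r$, at which $\nabla\psi(r)$ would have to equal both $v$ and $w$ --- a contradiction. Therefore $\nabla\psi\equiv(a,b)$ for constants with $a^2+b^2=1$, so $\psi-\psi(0,0)$ is the linear function $(x,y)\mapsto ax+by$, as claimed.

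The only point needing care is the last step, and it is mild: one must use that $\psi$ is entire (already exploited in producing complete characteristic lines) together with the elementary planar fact that two non-parallel lines in $\R^2$ must intersect; the existence of a linearly independent pair of gradient values is automatic as soon as the connected image $\nabla\psi(\R^2)\subseteq S^1$ is not a single point. No genuine computational obstacle arises.
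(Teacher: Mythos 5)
Your proof is correct, and it takes a genuinely different route from the paper's. The paper also starts from the eikonal equation $\psi_x^2+\psi_y^2\equiv 1$ and the observation that differentiating it forces $\psi_{xx}\psi_{yy}-\psi_{xy}^2\equiv 0$, but it then interprets this as the vanishing of the Euclidean Gaussian curvature of the graph, invokes the Hartman--Nirenberg cylinder theorem to write the surface as $\gamma(u)+v\mb{a}$, uses the line theorem (Fact~\ref{fact:IL}) to show the ruling direction $\mb{a}$ is light-like, and finally derives a contradiction from $\gamma'\cdot\mb{a}=0$ and $\gamma'\cdot\gamma'=0$ unless $\gamma$ is a line. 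You instead read the identity $(\op{Hess}\psi)\nabla\psi=0$ as saying that $\nabla\psi$ is a first integral of its own flow, so the integral curves of the unit vector field $\nabla\psi$ are complete straight lines along which $\nabla\psi$ is constant, and two such lines with independent directions would meet in a point where the gradient takes two values. This is the classical characteristics argument showing that entire $C^2$ solutions of the eikonal equation on $\R^2$ are affine; it is more elementary and self-contained than the paper's proof, avoiding both Hartman--Nirenberg and the ZMC line theorem, at the cost of being specific to this PDE rather than exhibiting the surface-geometric structure (flatness, ruling by a light-like direction) that the paper's argument makes visible. All the delicate points in your write-up check out: the completeness of the maximal integral curves follows from the escape lemma since $\gamma(t)=p+tv$ stays in $\R^2$, and a connected subset of $S^1$ with more than one point necessarily contains two non-antipodal, hence linearly independent, unit vectors.
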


We set
$
F(x,y)=(x,y,\psi(x,y)).
$
Since $F$ is a light-like surface,
$\psi_x^2+\psi_y^2=1$ holds on $\R^2$.
Differentiating this with respect to $x$
and $y$, we get two equations. 
Since $F$ is light-like, $(\psi_x,\psi_y)\ne (0,0)$.
By thinking $\psi_x,\,\, \psi_y$ are unknown variables
of these two equations,
the determinant $\psi_{xx}\psi_{yy}-\psi_{xy}^2$
vanishes identically.
So the Gaussian curvature of $F$ 
with respect to the Euclidean metric of $\R^3$ 
vanishes identically.
Then, by the Hartman-Nirenberg cylinder theorem,
$F$ must be a cylinder.  
(The proof of the cylinder theorem in \cite{HN} needs only
$C^2$-differentiability). 
That is, there exist a non-zero vector $\mb a$,
a plane $\Pi$ which is not parallel to  $\mb a$
and a regular curve 
$\gamma\colon \R\to \Pi$
such that
$
F(u,v):=\gamma(u)+v \mb a
$
gives a new parametrization of $F$.
If $F$ is not a plane, there exists
$u_0\in \R$ such that
$\gamma'(u_0)$ and $\gamma''(u_0)$
are linearly independent.
Then the point $(u,v)=(u_0,0)$ is not an umbilical 
point of $F$. Since the asymptotic direction 
is uniquely determined at each non-umbilical 
point on a flat surface, 
the line theorem (cf. Fact \ref{fact:IL}) yields 
that $\mb a$ is a light-like vector.
By a suitable homothetic transformation
and an isometric motion in $\mb L^3$, 
we may set $\mb a :=(1,0,1)$.
Then it holds that
\begin{equation}\label{eq:xt}
0=\gamma'\cdot \mb a=x'-t'.
\end{equation}
Since $\gamma'\cdot \gamma'=0$, we have
$y'=0$. So, with out loss of generality, we may assume $y(u)=0$.
Differentiating \eqref{eq:xt}, we have
$x''-t''=0$, contradicting the  fact that 
$\gamma'(u_0)$ and $\gamma''(u_0)$
are linearly independent.
Thus $F$ is a plane.
\qed

\begin{acknowledgement}
The authors would like to express their gratitude to
Atsufumi Honda for fruitful discussions.
\end{acknowledgement}

\end{document}